\def\@eqnnum{\rm ([section].\theequation)}
\numberwithin{equation}{section}
\newtheorem{df}{Definition}[section]
\newtheorem{p}[df]{Proposition}
\newtheorem{corol}[df]{Corollary}
\theoremstyle{definition}
\newtheorem{ex}[df]{Example}
\title{\bf  D-polynomials and Taylor formula\\ in quantum calculus}
\author{Piotr Multarzy\'{n}ski\\{\it Faculty of Mathematics and Information Science}\\
 {Warsaw University of Technology} \\
{\it 00-661 Warsaw, Pl. Politechniki 1, Poland}\\
e-mail: multarz@mini.pw.edu.pl}
\date{}
\begin{document}
\maketitle
\begin{abstract}
Quantum calculus based on the right invertible divided difference
operator $D_{\sigma}^{\tau}$ is proposed here in context of
algebraic analysis \cite{DPR}. The linear operator
$D_{\sigma}^{\tau}$, specified with the help of two fixed maps
$\sigma\; , \tau\colon M\rightarrow M$, generalizes the quantum
derivative operator used in $h$- or $q$-calculus \cite{kac}. In the
domain of $D_{\sigma}^{\tau}$ there are special elements defined as
$D_{\sigma}^{\tau}$-polynomials and the corresponding Taylor formula
is proved.
\end{abstract}
{\bf Keywords:} quantum calculus;  Taylor formula; difference operator; right invertible operator \\
{\bf MSC 2000: 12H10, 39A12, 39A70}

\section{Introduction}
The usual $h$- or $q$-calculus as well as many other types of
quantum calculi are specified by a fixed right invertible linear
operator $D$, the so-called quantum derivative. The right inverses
of $D$ allow us to define the concept of indefinite $D$-integrals.
Then, for example by applying Jackson formula, one can define the
corresponding definite integrals \cite{Jackson, kac}. On the other
hand, the calculus of right invertible operators turns out to be a
part of algebraic analysis developed by D. Przeworska-Rolewicz.  For
a right invertible operator $D$, its right inverses give rise to the
corresponding indefinite $D$-integrals, while definite $D$-integrals
are defined with the help of the corresponding initial operators.
For the comprehensive study of the topic we recommend Ref.
\cite{DPR}.

The general concept of $D$-polynomials defined for a right
invertible linear operator $D$ is analyzed in Section
\ref{Polynomials in alg}. In particular, an interesting result is
that the dimension of the linear space $P_n(D)$ of all
$D$-polynomials of degree less or equal $n$, $n\in\mathbb N$,
additionally depends on the dimension of $ker D$ (the so-called
space of $D$-constants), i.e. $dim P_n(D)=(n+1)\cdot ker D$, which
is infinite if $ker D$ is of infinite dimension. Then, in Section
\ref{Divided} we define the so-called $(\sigma ,\tau )$-quantum
derivative as a divided difference operator $D_{\sigma}^{\tau}$
based on three fixed mappings $\sigma ,\tau\colon M\rightarrow M$
(shifts) and $\theta\colon M\times M\rightarrow\mathbb R$ (tension
function), specifying the essence of quantum calculus considered.
One can show that $D_{\sigma}^{\tau}$ is right invertible
\cite{MULT1}. In analogy to the usual concept of polynomials, their
$(\sigma ,\tau )$-quantum counterparts are defined in Section
\ref{D-polynomials} for the assumed $(\sigma ,\tau )$-quantum
derivative $D_{\sigma}^{\tau}$. Finally, in Section \ref{Taylor} the
corresponding $(\sigma ,\tau )$-quantum Taylor formula is proved
(for analogy with q-calculus see \cite{kac}).
\section{Polynomials in algebraic analysis}\label{Polynomials in alg}
Let $X$ be a linear space over a field $\mathbb K$ and ${\cal L}(X)$
be the family of all linear mappings $D:U\rightarrow V$, for any
$U$, $V$ - linear subspaces of $X$. We shall use the notation
$dom(D)=U$, $codom(D)=V$ and $im\, D=\{Du: u\in U\}$ for the domain,
codomain and image of $D$, correspondingly.
 For any operators $D_1,D_2\in{\cal L}(X)$ and scalars
$k_1,k_2\in\mathbb K$, the linear combination $k_1D_1+k_2D_2$ as
well as the composition $D_1D_2$ are the elements of ${\cal L}(X)$
defined on the corresponding domains
\begin{equation}\label{domlinkomb}
dom(k_1D_1+k_2D_2)=dom(D_1)\cap dom(D_2)\, ,
\end{equation}
and
\begin{equation}\label{domcomposed}
dom(D_1D_2)=D_2^{-1}(dom(D_1))\, .
\end{equation}
The domains of all linear mappings considered in the sequel will be
understood in the sense of formulae (\ref{domlinkomb}),
(\ref{domcomposed}).

Throughout this paper we use the notation
\begin{equation}
{\mathbb N}=\{1,2,3,\ldots\}\;\;\; \mbox{and}\;\;\; {\mathbb
N}_0=\{0,1,2,3,\ldots\}\; .
\end{equation}

Whenever $D_1=\ldots =D_m=D\in {\cal L}(X)$, we shall write
$D^m=D_1\ldots D_m$, for $m\in\mathbb N$,  and additionally
$D^0=I\equiv id_{dom(D)}$.

For any $D\in{\cal L}(X)$ and  $m\in\mathbb N$, we assume the
notation
\begin{equation}
Z_0(D)=\{0\}\;\;\;\mbox{and}\;\;\;Z_m(D)=ker D^m\setminus ker
D^{m-1}\; .
\end{equation}
 Evidently, for any $D\in{\cal L}(X)$
there is
\begin{equation}\label{Zdisjoint}
Z_i(D)\cap Z_j(D)=\emptyset ,
\end{equation}
whenever $i\neq j$, and
\begin{equation}\label{sumZm}
\bigcup_{k=0}^{m}Z_k(D)= ker\, D^m .
\end{equation}
In the sequel we shall use the notation
\begin{equation}\label{ZD}
 Z(D)=ker\,D\, ,
\end{equation}
 and refer to $Z(D)$ as the space of constants for
$D\in{\cal L}(X)$.
\begin{p}\label{linindep}
Let $D\in{\cal L}(X)$, $m\in\mathbb N$, and $Z_i(D)\neq\emptyset$
for $i=1,\ldots ,m$. Then, any elements $u_i\in Z_i(D)$, $i=1,\ldots
, m$, are linearly independent.
\end{p}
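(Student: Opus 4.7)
The plan is to prove linear independence by the standard descending-index trick: assume a vanishing linear combination, pick the top nonzero coefficient, and apply a suitable power of $D$ to isolate it.

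Suppose for contradiction that $c_1 u_1 + \dots + c_m u_m = 0$ with not all $c_i$ zero. Let $j$ be the largest index in $\{1,\dots,m\}$ with $c_j\neq 0$, so the relation reduces to $c_1 u_1 + \dots + c_j u_j = 0$. I would then apply the operator $D^{j-1}$ to both sides. The key observation is that $\ker D^i \subseteq \ker D^k$ for all $k \ge i$ (since $D^k = D^{k-i}D^i$ annihilates anything already annihilated by $D^i$); in particular, for each $i<j$ we have $u_i \in \ker D^i \subseteq \ker D^{j-1}$, so $D^{j-1}u_i = 0$. On the other hand, $u_j \in Z_j(D) = \ker D^j \setminus \ker D^{j-1}$, so by the very definition of $Z_j(D)$ the element $D^{j-1}u_j$ is nonzero. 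Thus the image of the relation under $D^{j-1}$ collapses to $c_j\, D^{j-1}u_j = 0$, forcing $c_j = 0$, which contradicts the choice of $j$.

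Before running this argument I would briefly check that the domain conventions (\ref{domcomposed}) cause no trouble: each $u_i$ lies in $\ker D^i \subseteq \mathrm{dom}(D^i)$, and the inclusion $\ker D^i \subseteq \ker D^{j-1}$ used above shows in particular that $u_i\in \mathrm{dom}(D^{j-1})$ for $i<j$, while $u_j\in \mathrm{dom}(D^j) \subseteq \mathrm{dom}(D^{j-1})$. Hence $D^{j-1}$ is defined on every term appearing in the sum, and the manipulation is legitimate.

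I do not expect a genuine obstacle here; the only subtle point is the bookkeeping about domains, which is handled by the remark above. The heart of the proof is simply the inclusion chain $\ker D \subseteq \ker D^2 \subseteq \dots$ together with the defining property of the strata $Z_i(D)$, and these together make the selected top coefficient detectable by exactly $j-1$ applications of $D$.
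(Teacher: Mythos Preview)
Your proof is correct and follows essentially the same approach as the paper: both arguments apply a suitable power of $D$ to kill the lower-index terms and isolate the top coefficient, using that $u_i\in\ker D^i$ for $i<j$ while $D^{j-1}u_j\neq 0$. The only difference is cosmetic---you phrase it as a single contradiction on the maximal nonzero index, whereas the paper iterates downward through $\lambda_m,\lambda_{m-1},\ldots,\lambda_1$---and your explicit domain check is a nice touch the paper leaves implicit.
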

\begin{proof}
Consider a linear combination $u=\lambda_1 u_1+\ldots\lambda_m u_m$
and suppose that $u=0$ for some coefficients $\lambda_1,\ldots
,\lambda_m\in{\mathbb K}$. Hence we obtain the sequence of
equations: $D^{k}u=\lambda_{k+1}D^{k}u_{k+1}+\ldots +\lambda_m
D^{k}u_{m}=0$, for $k=1,\ldots ,m-1$. Step by step, from these
equations we compute $\lambda_m=0,\ldots ,\lambda_1 =0$.
\end{proof}

Let us define
\begin{equation}\label{calR(X)}
{\cal R}(X)=\{D\in{\cal L}(X): codom(D)=im\,D\},
\end{equation}
 i.e. each element
$D\in{\cal R}(X)$ is considered to be a surjective mapping (onto its
codomain). Thus, ${\cal R}(X)$ consists of all right invertible
elements.
\begin{df}\label{Rinverse}
 An operator $R\in{\cal L}(X)$ is said to be a right inverse of
 $D\in{\cal R}(X)$ if $dom(R)=im(D)$ and $DR=I\equiv id_{im(D)}$.
By ${\cal R}_{D}$ we denote the family of all right inverses of $D$.
\end{df}
In fact, ${\cal R}_{D}$ is a nonempty family, since for each $y\in
im(D)$ we can select an element $x\in D^{-1}(\{y\})$ and define
$R\in{\cal R}_{D}$ such that $R: y\mapsto x$.

The fundamental role in the calculus of right invertible operators
play the so-called initial operators, projecting the domains of
linear operators onto the corresponding space of their constants.
\begin{df}\label{initial}
Any operator $F\in{\cal L}(X)$, such that $dom(F)=dom(D)$,
$im(F)=Z(D)$ and $F^2=F$ is said to be an initial operator induced
by $D\in{\cal R}(X)$. We say that an initial operator $F$
corresponds to a right inverse $R\in{\cal R}_{D}$ whenever $FR=0$ or
equivalently if
\begin{equation}\label{FbyR}
F=I-RD\, .
\end{equation}
The family of all initial operators induced by $D$ will be denoted
by ${\cal F}_D$.
\end{df}
The families ${\cal R}_{D}$ and ${\cal F}_D$ uniquely determine each
other. Indeed, formula (\ref{FbyR}) characterizes initial operators
by means of right inverses, whereas formula
\begin{equation}\label{RbyF}
R=R'-FR'\, ,
\end{equation}
which is independent of $R'$, characterizes right inverses by means
of initial operators. Both families ${\cal R}_D$ and ${\cal F}_D$
are fully characterized by formulae
\begin{equation}\label{characterizeR}
{\cal R}_D=\{R+FA: dom\,A=im\,D,\, A\in{\cal L}(X)\},
\end{equation}
\begin{equation}\label{characterizeF}
{\cal F}_D=\{F(I-AD): dom\,A=im\,D,\, A\in{\cal L}(X)\},
\end{equation}
where    $R\in{\cal R}_{D}$ and $F\in{\cal F}_{D}$ are fixed
arbitrarily.

Let us illustrate the above concepts with two basic examples.
\begin{ex}\label{derivative} $X={\mathbb R}^{\mathbb R}$ - the linear space of all
functions, $D\in{\cal R}(X)$ - usual derivative, i.e.
 $Dx(t)\equiv x'(t)$, with $dom(D)\subset X$ consisting of all differentiable functions.
 Then, for an arbitrarily fixed $a\in\mathbb R$, by formula
 $Rx(t)=\int\limits_{a}^{t}x(s)ds$ one can define a right inverse $R\in{\cal
R}_{D}$ and the initial operator $F\in{\cal F}_D$ corresponding to
$R$ is given by $Fx(t)=x(a)$.
\end{ex}
\begin{ex}\label{difference}  $X={\mathbb {\mathbb
R}^{\mathbb N}}$ - the linear space of all sequences, $D\in{\cal
R}(X)$ - difference operator, i.e.
 $(Dx)_n=x_{n+1}-x_n$, for $n\in\mathbb N$. A right inverse $R\in{\cal
 R}_{D}$ is defined by the formulae $(Rx)_{1}=0$ and $(Rx)_{n+1}=\sum\limits_{i=1}^{n}x_{i}$  while
 $(Fx)_n=x_1$ defines the initial operator $F\in{\cal F}_D$ corresponding to $R$.
\end{ex}
An immediate consequence of Definition \ref{initial}, for an
invertible operator $D\in{\cal R}(X)$, i.e. $ker\, D=\{0\}$, is that
${\cal F}_{D}=\{0\}$. Therefore, the nontrivial initial operators do
exist only for operators which are right invertible but not
invertible. The family of all such operators is denoted by
\begin{equation}\label{nontriv}
{\cal R}^+(X)=\{D\in{\cal R}(X): dim\, Z(D) >0 \}.
\end{equation}
\begin{p}[Taylor Formula]
Suppose $D\in{\cal R}(X)$ and let $F\in{\cal F}_{D}$ be an initial
operator corresponding to $R\in{\cal R}_{D}$. Then the operator
identity
\begin{equation}\label{taylor}
I=\sum_{k=0}^{m}R^kFD^k + R^{m+1}D^{m+1}\, ,
\end{equation}
holds on $dom(D^{m+1})$,  for $m\in{\mathbb N}_0$.
\end{p}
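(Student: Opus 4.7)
The plan is to proceed by induction on $m$, leveraging the fundamental identity $F=I-RD$ from Definition \ref{initial}.

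For the base case $m=0$, I would simply observe that equation (\ref{FbyR}) rearranges to $I = F + RD$, which is exactly (\ref{taylor}) with $m=0$, and it holds on $\text{dom}(D)$.

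For the inductive step, assuming (\ref{taylor}) holds on $\text{dom}(D^{m+1})$, I would pick an arbitrary $x \in \text{dom}(D^{m+2})$, so that $D^{m+1}x \in \text{dom}(D)$, and apply the base identity to the element $D^{m+1}x$, obtaining
\begin{equation*}
D^{m+1}x = F D^{m+1}x + R D^{m+2}x.
\end{equation*}
Applying $R^{m+1}$ to both sides yields
\begin{equation*}
R^{m+1}D^{m+1}x = R^{m+1}FD^{m+1}x + R^{m+2}D^{m+2}x.
\end{equation*}
Substituting this expression for the remainder term $R^{m+1}D^{m+1}$ in the inductive hypothesis produces
\begin{equation*}
I x = \sum_{k=0}^{m} R^k F D^k x + R^{m+1}F D^{m+1} x + R^{m+2} D^{m+2} x = \sum_{k=0}^{m+1} R^k F D^k x + R^{m+2}D^{m+2} x,
\end{equation*}
which is the desired identity at level $m+1$.

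The argument is essentially a telescoping unfolding of $I = F + RD$, so the only delicate point is bookkeeping of domains: one must check that every composition appearing in the formula is defined on $\text{dom}(D^{m+1})$, which is why the induction is carried out pointwise on $x\in\text{dom}(D^{m+2})$ using the conventions (\ref{domlinkomb}), (\ref{domcomposed}). No other obstacle is expected; in particular, $FD^{m+1}x$ is well defined because $\text{dom}(F)=\text{dom}(D)$ contains $D^{m+1}x$, and $R^{m+1}$ can be applied because $FD^{m+1}x \in Z(D) \subset \text{dom}(R)$ together with $R(\text{im}\,D) \subset \text{dom}(D)$ propagating through the iterates.
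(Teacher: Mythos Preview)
Your proposal is correct and follows exactly the approach indicated in the paper, which records only ``(Induction) See Ref.\ \cite{DPR}'' as its proof: you supply the standard inductive unfolding of $F=I-RD$, splitting the remainder $R^{m+1}D^{m+1}$ via the base identity to advance from level $m$ to $m+1$. The only minor caveat is your claim $Z(D)\subset\operatorname{dom}(R)$, which is not argued from the definitions but is used implicitly throughout the paper (e.g.\ in the proof of Proposition~\ref{nonemptyZm}) and is part of the standing framework of \cite{DPR}; beyond that, nothing is missing.
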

\begin{proof}(Induction) See Ref.\cite{DPR}.
\end{proof}
Equivalent identity,   expressed as
\begin{equation}\label{xtaylor}
x=\sum_{k=0}^{m}R^kFD^kx + R^{m+1}D^{m+1}x\, ,
\end{equation}
for $x\in dom(D^{m+1})$ and $m\in{\mathbb N}_0$, is an algebraic
counterpart of the Taylor expansion formula,  commonly known in
mathematical analysis. The first component of the last formula
reflects the polynomial part while the second one can be viewed as
the corresponding reminder.

\begin{ex}To clearly demonstrate the resemblance of formula (\ref{xtaylor})
with the commonly used Taylor expression, we take $D$, $R$ and $F$
as in Example \ref{derivative}. Since there are many forms of the
reminders in use, it is more interesting to calculate the polynomial
part, which gives the well known result
$\sum\limits_{k=0}^{m}R^kFD^kx(t)=\sum\limits_{k=0}^{m}\frac{x^{(k)}(a)}{k!}(t-a)^{k}$,
for any function $x\in dom(D^{m+1})$.
\end{ex}
\begin{p}\label{nillpot}
Let $D\in {\cal R}(X)$ and $R\in{\cal R}_{D}$. Then $R$ is not a
nilpotent operator.
\end{p}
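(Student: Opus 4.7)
The plan is a short proof by contradiction. Suppose $R$ is nilpotent, so that $R^n = 0$ for some $n \in \mathbb{N}$.

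The central observation is that the defining property $DR = I$ propagates to higher powers. I would first prove, by induction on $k$, the identity $D^k R^k = I$ on its natural domain. The base case is the defining relation $DR = I$ on $im\,D$ from Definition \ref{Rinverse}; for the inductive step, associativity and the induction hypothesis give
\[
D^{k+1} R^{k+1} = D\bigl(D^k R^k\bigr) R = D \cdot I \cdot R = DR = I.
\]
Specializing at $k = n$ yields $I = D^n R^n = D^n \cdot 0 = 0$ on the natural domain of $R^n$. Since $X$ and hence $im\,D$ are implicitly non-trivial, the identity operator cannot coincide there with the zero operator, and the required contradiction follows.

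The main delicate point is that in the paper's framework domains are subspaces, and the composition $R^n$ may be defined on a strictly smaller subspace than $R$ itself; one should therefore justify that the natural domain on which $D^n R^n = I$ is compared to $R^n = 0$ is not itself the trivial subspace $\{0\}$. To sidestep this I would argue pointwise: from $DR = I$ one sees that $R$ is injective, so picking any nonzero $y \in im\,D$ gives $Ry \neq 0$, and the iterated identity $D^k R^k = I$ forces $R^k y \neq 0$ whenever $R^k y$ is defined. In particular $R^n y \neq 0$, which directly contradicts $R^n = 0$ as an operator identity.
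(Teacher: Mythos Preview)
Your proof is correct and follows essentially the same idea as the paper's: derive a contradiction from $DR=I$. The paper's argument is marginally more economical---it picks the minimal $n$ with $R^n\neq 0$ and $R^{n+1}=0$ and applies $D$ just once to obtain $0\neq R^n=(DR)R^n=DR^{n+1}=0$---but the underlying approach is the same as yours, and your discussion of domains is simply more careful than what the paper records.
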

\begin{proof}
Suppose that $R^n\neq 0$ and $R^{n+1}=0$, for some $n\in\mathbb N$.
Then $0\neq R^n=IR^n=DRR^n=DR^{n+1}=0$, a contradiction.
\end{proof}
\begin{p}\label{nonemptyZm}
If $D\in{\cal R}^+(X)$, then $Z_m(D)\neq\emptyset$, for any
$m\in\mathbb N$.
\end{p}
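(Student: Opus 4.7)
The approach is to exhibit, for each $m \in {\mathbb N}$, a concrete element of $Z_m(D)$ by iterating a right inverse of $D$ on a nontrivial $D$-constant. Since $D \in {\cal R}^+(X)$, definition (\ref{nontriv}) supplies a nonzero $z \in Z(D) = ker\, D$; by the remark following Definition \ref{Rinverse} the family ${\cal R}_D$ is nonempty, so I fix an arbitrary $R \in {\cal R}_D$ and set $u_m := R^{m-1} z$ as the candidate witness.

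The verification rests on the operator identity $D^k R^k = I$, which follows by straightforward induction from $DR = I$ on $im\, D$ (the same device already used in the proof of Proposition \ref{nillpot}). Taking $k = m-1$ gives $D^{m-1} u_m = D^{m-1} R^{m-1} z = z \neq 0$, so $u_m \notin ker\, D^{m-1}$; applying $D$ once more yields $D^m u_m = D(D^{m-1} u_m) = Dz = 0$, so $u_m \in ker\, D^m$. Combined, these two facts place $u_m$ in $ker\, D^m \setminus ker\, D^{m-1} = Z_m(D)$, proving $Z_m(D) \neq \emptyset$. The boundary case $m = 1$ is handled by $u_1 = R^0 z = z \in ker\, D \setminus \{0\} = Z_1(D)$.

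The only delicate point, which I regard as the main technical check rather than a genuine obstruction, is the well-definedness of the iterate $R^{m-1} z$ inside the domain conventions (\ref{domlinkomb})--(\ref{domcomposed}): one needs $R^j z \in dom(R) = im\, D$ for each $j = 0, \ldots, m-2$. This is precisely the sort of bookkeeping implicit whenever iterates of $R$ were composed freely in earlier arguments such as Proposition \ref{nillpot}, and requires no new ideas beyond the standing assumptions of the algebraic-analysis framework.
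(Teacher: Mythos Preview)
Your proof is correct and follows essentially the same route as the paper: pick a nonzero $z\in Z_1(D)$ and a right inverse $R\in{\cal R}_D$, then exhibit $R^{m-1}z$ as the desired element of $Z_m(D)$. Your write-up is in fact more explicit than the paper's, which simply asserts $R^{m-1}z\in Z_m(D)$ without spelling out the verification via $D^kR^k=I$ or commenting on the domain bookkeeping.
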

\begin{proof}
The relation $Z_1(D)\neq\emptyset$ is straightforward. Let
$R\in{\cal R}_{D}$ and $z\in Z_1(D)$ be arbitrarily chosen elements.
Then, for any $m\in\mathbb N$, there is $R^{m-1}z\in Z_m(D)$.
\end{proof}
With right invertible operators possessing nontrivial kernels we
associate the following concept of $D$-polynomials.
\begin{df}\label{deg-polynomials}
If $D\in{\cal R}^+(X)$, then any element $u\in Z_{m+1}(D)$ is said
to be a $D$-polynomial of degree m, i.e. $deg\, u = m$, for
$m\in{\mathbb N}_0$. We assign no degree to the zero polynomial
$u\in Z_0(D)\equiv\{0\}$.
\end{df}
For the convenience' sake, one can also use the convention $deg\,
0=-\infty$.
\begin{p}\label{polsumRz}
If $D\in{\cal R}^+(X)$ and $R\in{\cal R}_{D}$, then for any
$D$-polynomial $u\in Z_{m+1}(D)$ there exist elements $z_0,\ldots
,z_m\in Z_1(D)$ such that
\begin{equation}\label{uRz}
u=z_0+Rz_1+\ldots +R^mz_m .
\end{equation}
\end{p}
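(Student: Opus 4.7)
The plan is to read the statement as an immediate corollary of the algebraic Taylor formula (\ref{xtaylor}). Since $u \in Z_{m+1}(D) \subset \ker D^{m+1} \subset \operatorname{dom}(D^{m+1})$, the identity applies to $u$ with the fixed right inverse $R$ and its corresponding initial operator $F=I-RD$, giving
\[
u \;=\; \sum_{k=0}^{m} R^{k} F D^{k} u \;+\; R^{m+1} D^{m+1} u.
\]
By the very definition of $Z_{m+1}(D)$ we have $D^{m+1}u=0$, so the remainder term vanishes and the sum on the right is exactly of the form claimed.

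The natural candidates for the coefficients are $z_k := F D^{k} u$ for $k=0,1,\ldots,m$. Since $\operatorname{im} F = Z(D) = \ker D$ by Definition~\ref{initial}, each $z_k$ lies in $Z(D)$. Moreover, because $F$ restricted to $Z(D)$ is the identity (indeed, for any $z\in\ker D$, $Fz=z-RDz=z$), the top coefficient equals $z_m = D^{m} u$, which is nonzero since $u\notin\ker D^m$, hence $z_m\in Z_1(D)$ in the strict sense of the definition.

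The only delicate point is the claim that every $z_k$ lies in $Z_1(D)=\ker D\setminus\{0\}$: a lower-order $z_k$ could in principle be zero, in which case it sits in $Z(D)$ but not in $Z_1(D)$. This is resolved by the convention $\deg 0=-\infty$ already noted after Definition~\ref{deg-polynomials}: any zero term contributes nothing to the sum (\ref{uRz}) and can simply be omitted, so the representation $u=z_0+Rz_1+\ldots+R^{m}z_m$ is obtained with $z_0,\ldots,z_m\in Z_1(D)$ (padding with zero terms where convenient).

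I do not expect a serious obstacle: everything reduces to plugging $u$ into Taylor's formula and identifying $\operatorname{im} F$ with the space of constants. The only care required is the bookkeeping around $Z_1(D)$ versus $Z(D)$ described above, and the verification that $F$ is the identity on $\ker D$, which is a one-line consequence of the formula $F=I-RD$.
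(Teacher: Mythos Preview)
Your argument is essentially identical to the paper's: apply the Taylor identity (\ref{xtaylor}) to $u\in Z_{m+1}(D)$, drop the vanishing remainder $R^{m+1}D^{m+1}u$, and set $z_k=FD^ku\in Z(D)$. You additionally flag (and handle reasonably) the minor discrepancy between $Z_1(D)=\ker D\setminus\{0\}$ and $Z(D)$ that the paper's own proof simply ignores.
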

\begin{proof}By formula (\ref{xtaylor}) we can write the
identity $u=\sum_{k=0}^{m}R^kFD^ku$ since $u\in Z_{m+1}(D)$ and
$R^{m+1}D^{m+1}u=0$. Then we define elements $z_k=FD^ku$,
$k=0,\ldots ,m\,$, which ends the proof.
\end{proof}

\begin{df}\label{monomials} Let $D\in{\cal R}^+(X)$ and $R\in{\cal R}_{D}$.
Then, any element $R^kz\in Z_{k+1}(D)$, for $z\in Z_1$, is said to
be an $R$-homogeneous $D$-polynomial (or $R$-monomial) of degree
$k\in{\mathbb N}_0$.
\end{df}
 Thus, any $D$-polynomial $u\in Z_{m+1}(D)$, of degree $deg\, u =
m$, is a sum of linearly independent $R$-homogeneous elements
$R^kz_k$, $k=0,\ldots ,m$. The linear space of all $D$-polynomials
is then
\begin{equation}\label{PolD}
P(D)=\bigcup\limits_{k=0}^{\infty}ker\, D^k
\end{equation}
whereas
\begin{equation}\label{PolDn}
P_n(D)=\bigcup\limits_{k=0}^{n}ker\, D^k=ker\, D^n
\end{equation}
 is the linear space of all $D$-polynomials of degree at most
$n\in{\mathbb N}_0$.

Let us fix a basis
\begin{equation}\label{zetabasis}
\{\zeta_s\}_{s\in S}\subset Z(D)\; ,
\end{equation}
  of the linear space $Z(D)$, $D\in{\cal R}^+(X)$, and define
\begin{equation}
Z^s(D)=Lin\{\zeta_s\}\, ,
\end{equation}
for $s\in S$. Then
\begin{equation}\label{ZDdirect}
Z(D)=\bigoplus_{s\in S}Z^s(D)\, .
\end{equation}
\begin{p}\label{polbasis}
For an arbitrary right inverse $R\in{\cal R}_{D}$, the family
$\{R^m\zeta_s\colon {s\in S}, m\in{\mathbb N}_0\}$ is the basis of
the linear space $P(D)$. Naturally,  $\{R^m\zeta_s\colon {s\in S},
m=0,1,\ldots n\}$ forms the basis of the linear space $P_n(D)$, for
$n\in{\mathbb N}_0$.
\end{p}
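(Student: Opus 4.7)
The plan is to verify the two defining properties of a basis separately: spanning and linear independence.

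For spanning, I would invoke Proposition \ref{polsumRz} directly. Any $D$-polynomial $u \in P(D)$ lies in some $Z_{m+1}(D)$, so by that proposition it can be written as $u = z_0 + Rz_1 + \cdots + R^m z_m$ with each $z_k \in Z(D)$. Because $\{\zeta_s\}_{s \in S}$ is a basis of $Z(D)$, each $z_k$ is a finite linear combination of the $\zeta_s$, and substituting yields $u$ as a finite linear combination of the elements $R^k \zeta_s$. This shows that $\{R^m \zeta_s : s \in S,\, m \in \mathbb{N}_0\}$ spans $P(D)$, and restricting to $m \le n$ gives the spanning statement for $P_n(D)$.

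For linear independence, the key observation is that $DR = I$ and $D\zeta_s = 0$. Suppose we have a finite vanishing combination $\sum_{k=0}^{N}\sum_{s \in S_k} \lambda_{k,s} R^k \zeta_s = 0$, where each $S_k \subset S$ is finite. Grouping by power of $R$, set $y_k = \sum_{s \in S_k} \lambda_{k,s} \zeta_s \in Z(D)$, so $\sum_{k=0}^{N} R^k y_k = 0$. I would then apply $D^N$ to both sides. For $k \le N$ one has $D^N R^k = D^{N-k}$, and since $y_k \in \ker D$, this kills every term with $k < N$, leaving $y_N = 0$. Since $\{\zeta_s\}$ is a basis of $Z(D)$, this forces $\lambda_{N,s} = 0$ for all $s \in S_N$. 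Iterating the same argument on $\sum_{k=0}^{N-1} R^k y_k = 0$ (descending induction on $N$) yields $\lambda_{k,s} = 0$ for all $k$ and $s$.

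The main obstacle, such as it is, is just being careful with the identity $D^N R^k = D^{N-k}$ for $k \le N$; this follows immediately from $DR = I$ by collapsing the $k$ adjacent $D$--$R$ pairs, but it is what makes the whole argument go through. Everything else is bookkeeping with finite sums indexed over (possibly infinite) $S$. The restriction to $P_n(D) = \ker D^n$ requires no extra work: the spanning family for $P_n(D)$ is simply the subfamily with $m \le n$, and linear independence is inherited from the full family.
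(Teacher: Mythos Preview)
Your proof is correct and follows essentially the same route as the paper's: spanning via Proposition~\ref{polsumRz} together with the expansion of each $z_k$ in the basis $\{\zeta_s\}$, and linear independence by applying the highest power of $D$ to isolate the top term and then descending inductively. The only cosmetic difference is that the paper indexes the combination by the distinct powers $m_1<\cdots<m_k$ that actually occur, whereas you pad with consecutive powers $0,\ldots,N$; the underlying argument is identical.
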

\begin{proof}Let $u=\sum_{i=1}^{k}\sum_{s\in S_{i}}a_{is} R^{m_i}\zeta_{s}$,
 $m_1<\ldots <m_k$ and  $S_i\subset S$ be finite subsets for
$i=1,\ldots ,k$. Assume $u=0$ and calculate $D^{m_k}u=\sum_{s\in
S_{k}}a_{ks}\zeta_{s}=0$, which implies $a_{ks}=0$, for all $s\in
S_k$. Hence $u=\sum_{i=1}^{k-1}\sum_{s\in S_{i}}a_{is}
R^{m_i}\zeta_{s}=0$ and analogously we get $D^{m_{k-1}}u=\sum_{s\in
S_{k-1}}a_{(k-1)s}\zeta_{s}=0$, which implies $a_{(k-1)s}=0$, for
all $s\in S_{k-1}$. Similarly we prove that $a_{is}=0$, for all
$s\in S_{i}$, $i=k-2,\ldots ,1$. Now, let $u\in P(D)$ be a
polynomial of degree $deg\,u=n \in{\mathbb N}_0\}$. Then, on the
strength of Proposition \ref{polsumRz}, we can write
$u=\sum_{k=0}^{n}R^kz_k $, for some elements $z_0,\ldots ,z_n\in
Z(D)$. In turn, each element $z_k$ can be expressed as a linear
combination $z_k=\sum_{s\in S_k}a_{ks}\zeta_s$, for some finite
subset of indices $S_k\subset S$. Hence we obtain
$u=\sum_{k=0}^{n}\sum\limits_{s\in S_k}a_{ks}R^k\zeta_s\,$.
\end{proof}

With a right inverse $R\in{\cal R}_{D}$, $s\in S$ and $n\in{\mathbb
N}_0$, we shall associate the linearly independent family
$\{R^m\zeta_s\colon m\in\{0,\ldots ,n\}\}$ forming a basis of the
linear space of $s$-homogeneous $D$-polynomials
\begin{equation}\label{Vsndirect}
V_{s}^{n}(D)=Lin\{R^m\zeta_s\colon m\in\{0,\ldots ,n\}\}\, ,
\end{equation}
(independent of the choice of $R$) of dimension
\begin{equation}\label{dimVsn}
dim\,V_{s}^{n}(D)=n+1\, ,
\end{equation}
being a linear subspace of $P_n(D)$. Then, on the strength of
Proposition \ref{polbasis}, the linear space $P_n(D)$ is a direct
sum
\begin{equation}\label{Pndirect}
P_n(D)=\bigoplus_{s\in S}V_s^n(D)\, .
\end{equation}
\begin{corol}If $dim\, Z(D)<\infty$, the following formula holds
\begin{equation}\label{dimpoln}
dim\, P_n(D)=(n+1)\cdot dim\, Z(D)\, ,
\end{equation}
for any $n\in{\mathbb N}_0$.
\end{corol}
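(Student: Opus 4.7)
The plan is to simply combine two facts that have already been established just above the corollary statement. Specifically, the paper has shown (equation \eqref{Pndirect}) that
\[
P_n(D) = \bigoplus_{s \in S} V_s^n(D),
\]
where $S$ indexes a basis $\{\zeta_s\}_{s \in S}$ of $Z(D)$, and (equation \eqref{dimVsn}) that each summand satisfies $\dim V_s^n(D) = n+1$.

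The first step is to observe that the hypothesis $\dim Z(D) < \infty$ means the index set $S$ is finite with $|S| = \dim Z(D)$. The second step is to apply the standard dimension formula for a direct sum of finitely many finite-dimensional subspaces,
\[
\dim P_n(D) = \sum_{s \in S} \dim V_s^n(D).
\]
The third step is to substitute $\dim V_s^n(D) = n+1$ into this sum, which gives
\[
\dim P_n(D) = |S| \cdot (n+1) = (n+1) \cdot \dim Z(D),
\]
as desired.

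There is no real obstacle here: the corollary is an immediate bookkeeping consequence of Proposition \ref{polbasis} (which gave the basis $\{R^m \zeta_s : s \in S,\ m = 0, \dots, n\}$ of $P_n(D)$) once one notes that the cardinality of this basis is $(n+1) \cdot |S|$ when $S$ is finite. In fact one could bypass the direct-sum language entirely and just count this basis directly; the only place finiteness of $\dim Z(D)$ is used is to guarantee that this product is a well-defined finite number rather than a cardinal arithmetic expression of the form $(n+1) \cdot \infty$.
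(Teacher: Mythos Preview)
Your proposal is correct and matches the paper's intent: the corollary is stated without proof, appearing immediately after formulae \eqref{dimVsn} and \eqref{Pndirect}, so the implicit argument is exactly the direct-sum dimension count you wrote out. Your alternative of directly counting the basis from Proposition~\ref{polbasis} is equally valid and amounts to the same thing.
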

Naturally, one can extend formula (\ref{Vsndirect}) and define
\begin{equation}\label{Vdirect}
V_{s}(D)=Lin\{R^m\zeta_s\colon m\in{\mathbb N}_0\}\, ,
\end{equation}
which is both $D$- and $R$-invariant subspace of $P(D)$, i.e.
\begin{equation}\label{DinvariantVsn}
DV_{s}(D)\equiv\{Du\colon u\in V_{s}(R)\}\subset V_{s}(D)\, ,
\end{equation}
\begin{equation}\label{RinvariantVsn}
RV_{s}(D)\equiv\{Ru\colon u\in V_{s}(D)\}\subset V_{s}(D)\, .
\end{equation}
Thus,  $P(D)$ turns out to be simultaneously $D$- and $R$-invariant
linear subspace of $X$, since it can be decomposed as the following
direct sum
\begin{equation}\label{PDVsdirect}
P(D)=\bigoplus_{s\in S}V_s(D)\, .
\end{equation}
 Since $P(D)$ is a linear subspace of $X$, there exists
(not uniquely) another linear subspace $Q(D)$ of $X$ such that
\begin{equation}\label{PsumQ}
X=P(D)\oplus Q(D)\, .
\end{equation}
Then, every linear mapping $\phi\colon X\rightarrow X$ can be
decomposed as the direct sum
\begin{equation}\label{phiPQ}
\phi=\phi_{_P}\oplus\phi_{_Q}\, ,
\end{equation}
of two restrictions $\phi_{_P}=\phi_{|P(D)}$ and
$\phi_{_Q}=\phi_{|Q(D)}$, i.e. for any $x'\in P(D)$ and $x''\in
Q(D)$ there is $\phi (x'+x'')=\phi_{_P}(x')+\phi_{_Q}(x'')$. In
particular, the mappings  $D\in{\cal R}^+(X)$, $R\in{\cal R}_{D}$
can be decomposed as direct sums $D=D_{_P}\oplus D_{_Q}$,
$R=R_{_P}\oplus R_{_Q}$ such that
\begin{equation}
I=DR=D_{_P}R_{_P}\oplus D_{_Q}R_{_Q}=I_{_P}\oplus I_{_Q}\, ,
\end{equation}
\begin{equation}
RD=R_{_P}D_{_P}\oplus R_{_Q}D_{_Q}\, ,
\end{equation}
which allows for the decomposition of the initial operator $F$
corresponding to $R$
$$
F=I-RD=I_{_P}\oplus I_{_Q}-R_{_P}D_{_P}\oplus R_{_Q}D_{_Q}=
$$
\begin{equation}
=(I_{_P}-R_{_P}D_{_P})\oplus (I_{_Q}-R_{_Q}D_{_Q})=F_{_P}\oplus
F_{_Q}.
\end{equation}
\begin{p}
Let $D\in{\cal R}^+(X)$, $R',R''\in{\cal R}_{D}$ be any right
inverses and $F',F''\in{\cal F}_{D}$ be the initial operators
corresponding to $R'$ and $R''$, respectively. Then
$R:=R'_{_P}\oplus R''_{_Q}\in{\cal R}_{D}$ and $F:=F'_{_P}\oplus
F''_{_Q}\in{\cal F}_{D}$ corresponds to $R$.
\end{p}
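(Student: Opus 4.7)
\bigskip
\noindent\textbf{Proof sketch.}
The plan is to verify two things in order: (i) that $R := R'_{_P} \oplus R''_{_Q}$ really is a right inverse of $D$, and (ii) that $F := F'_{_P} \oplus F''_{_Q}$ coincides with $I - RD$, which by Definition~\ref{initial} (formula (\ref{FbyR})) then automatically identifies $F$ as the initial operator corresponding to $R$. The whole argument will take place within the direct-sum decomposition $X = P(D) \oplus Q(D)$ fixed in (\ref{PsumQ}), using the operator splitting $\phi = \phi_{_P} \oplus \phi_{_Q}$ from (\ref{phiPQ}).

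For step (i), first decompose $D = D_{_P} \oplus D_{_Q}$ and note that, applied to $R'$, the identity $DR' = I$ splits along $P(D) \oplus Q(D)$ into $D_{_P} R'_{_P} = I_{_P}$ and $D_{_Q} R'_{_Q} = I_{_Q}$; applied analogously to $R''$ it gives $D_{_P} R''_{_P} = I_{_P}$ and $D_{_Q} R''_{_Q} = I_{_Q}$. In particular both of the pieces I need, namely $D_{_P} R'_{_P}$ and $D_{_Q} R''_{_Q}$, are identities on their respective summands. Composing directly,
\[
DR = (D_{_P} \oplus D_{_Q})(R'_{_P} \oplus R''_{_Q}) = D_{_P} R'_{_P} \oplus D_{_Q} R''_{_Q} = I_{_P} \oplus I_{_Q} = I,
\]
so $R \in \mathcal{R}_D$.

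For step (ii), I reuse the calculation already performed in the excerpt but with the hybrid $R$: since $R'$ and $R''$ are right inverses and $F'$, $F''$ are their corresponding initial operators, one has $F'_{_P} = I_{_P} - R'_{_P} D_{_P}$ and $F''_{_Q} = I_{_Q} - R''_{_Q} D_{_Q}$. Therefore
\[
I - RD = (I_{_P} - R'_{_P} D_{_P}) \oplus (I_{_Q} - R''_{_Q} D_{_Q}) = F'_{_P} \oplus F''_{_Q} = F,
\]
which by (\ref{FbyR}) says exactly that $F$ is the initial operator induced by $D$ that corresponds to the right inverse $R$.

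The only genuine subtlety — and what I expect to be the main point needing care — is that the componentwise composition used in step (i) is legitimate. That is, one must be sure that the restrictions $R'_{_P}$ and $R''_{_Q}$ make sense as direct summands and that composing them with $D_{_P}$, $D_{_Q}$ gives the claimed pieces of $DR$. This rests on the $D$- and $R$-invariance of $P(D)$ recorded in (\ref{DinvariantVsn})--(\ref{RinvariantVsn}), together with the decomposition formalism (\ref{phiPQ}); once those are in place the computation is essentially formal manipulation inside the fixed direct sum, and no further structural input is needed.
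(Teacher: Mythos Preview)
Your proof is correct and follows essentially the same route as the paper: compute $DR = D_{_P}R'_{_P}\oplus D_{_Q}R''_{_Q}=I_{_P}\oplus I_{_Q}=I$, then compute $RD$ componentwise and identify $I-RD$ with $F'_{_P}\oplus F''_{_Q}$. Your added remark about the $D$- and $R$-invariance of $P(D)$ being what makes the blockwise composition legitimate is a welcome clarification that the paper's proof leaves implicit.
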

\begin{proof}
$$DR=(D_{_P}\oplus D_{_Q})(R'_{_P}\oplus
R''_{_Q})=D_{_P}R'_{_P}\oplus D_{_Q}R''_{_Q}=I_{_P}\oplus I_{_Q}=I\,
,$$
$$RD=(R'_{_P}\oplus R''_{_Q})(D_{_P}\oplus D_{_Q})=R'_{_P}D_{_P}\oplus R''_{_Q}D_{_Q}\,
,$$

$$F=F'_{_P}\oplus F''_{_Q}=(I_{_P}-R'_{_P}D_{_P})\oplus
(I_{_Q}-R''_{_Q}D_{_Q})=
$$
$$=
I_{_P}\oplus I_{_Q}-R'_{_P}D_{_P}\oplus R''_{_Q}D_{_Q}=I-RD\, .
$$
\end{proof}
The last results allow one to combine right inverses and initial
operators as direct sums of independent components.
\section{Divided difference operators in $(\sigma ,\tau )$-quantum
calculus}\label{Divided}
Quantum calculus  is based on a difference operator, called also a
quantum differential, defined as
\begin{equation}\label{operator-d}
d_{h'q'}^{hq}f(x)=f(qx+h)-f(q'x+h'),
\end{equation}
with the natural assumption that either $q\neq q'$ or $h\neq h'$.
Let us denote by $e$ the identity function, i.e. $e(x)\equiv x$, and
define a divided difference operator $D_{h'q'}^{hq}$ by formula
\begin{equation}\label{operator-D}
D_{h'q'}^{hq}f(x)=\frac{df(x)}{de(x)}\equiv\frac{f(qx+h)-f(q'x+h')}{(q-q')x+h-h'}\,
.
\end{equation}
 We shall refer to $D_{h'q'}^{hq}$ as the quantum
derivative operator \cite{kac}. If the parameters are known from
context, the simplified notation $d\equiv d_{h'q'}^{hq}$ and
$D\equiv D_{h'q'}^{hq}$ is used.

The following four cases of quantum calculus are the most common
ones:
\begin{enumerate}
\item $h$-calculus when $h\neq 0$, $h'=0$ and $q=q'=1$  ,
\item $q$-calculus when $q\neq 1$,  $q'=1$ and $h=h'=0$,
\item $h$-symmetric calculus when $h=-h'\neq 0$ and
$q=q'=1$,
\item $q$-symmetric calculus when $q=q'^{-1}\neq 1$ and
$h=h'=0$.
\end{enumerate}
The expressions $qx+h$, $q'x+h'$ can be replaced by more general
ones $\tau (x)$, $\sigma (x)$ and formulae (\ref{operator-d}),
(\ref{operator-D}) can be rewritten as
\begin{equation}\label{operator-dtau}
d_{\sigma}^{\tau}f(x)=f(\tau(x))-f(\sigma(x))\, ,
\end{equation}
and
\begin{equation}\label{operator-Dtau}
D_{\sigma}^{\tau}f(x)=\frac{d_{\sigma}^{\tau}f(x)}{d_{\sigma}^{\tau}e(x)}\equiv
\frac{f(\tau(x))-f(\sigma(x)}{\tau(x)-\sigma(x)}\, .
\end{equation}
If the mappings $\sigma$ and $\tau$ are known from context, we shall
use the simplified notation $d\equiv d_{\sigma}^{\tau}$ and $D\equiv
D_{\sigma}^{\tau}$.

To prevent the denominator of formula (\ref{operator-Dtau}) from
being zero, all functions considered above are defined on the
naturally restricted domain \begin{equation}\label{domainM}
M=\{x\in{\mathbb R}:\sigma(x)\neq\tau (x) \}\, .
\end{equation}
The fixed mappings $\sigma ,\tau :M\rightarrow M$, together with the
domain $M\subseteq\mathbb R$, specify the type of quantum calculus
considered.

In this paper we study a generalization of the  quantum calculus
presented in Ref. \cite{kac}. We assume $M$ to be an arbitrary set
and fix two mappings $\sigma ,\tau  :M\rightarrow M$ corresponding
to those mentioned above.

However, $\sigma$ and $\tau$ are not real-valued maps, in general.
Therefore, to adapt formula (\ref{operator-Dtau}) we need a numeric
expression that will play the role of a corresponding denominator.
For that purpose we endow $M$ with a tension structure, defined with
the help of one or more tension functions \cite{MULT1}.
\begin{df}\label{theta} A function
$\theta :M\times M\rightarrow\mathbb R$ is said to be a tension
function if
\begin{equation}
\theta (p_1,p_2)+\theta (p_2,p_3)=\theta (p_1,p_3) ,
\end{equation}
for any $p_1, p_2, p_3\in M$.
\end{df}
Directly from definition it follows that a linear combination of
tension functions defined on $M\times M$ is a tension function
again. Thus, any family of tension functions defined on $M\times M$
generates the corresponding linear space.
\begin{df}\label{structureT}
Any linear space $T$ of tension functions defined on $M\times M$ is
said to be a tension structure on $M$ and the pair $(M,T)$ is called
the tension space of dimension $dim\, T$.
\end{df}
Directly from the above definition, one can show that any tension
function $\theta\in T$ is skew symmetric, i.e.
\begin{equation}
\theta (p_1,p_2) = - \,\theta (p_2,p_1)\, ,
\end{equation}
for any $p_1, p_2\in M$. With any $\theta\in T$ we associate
functions $\theta_q:M\rightarrow\mathbb R$ defined by
\begin{equation}\label{theta_q}
\theta_q (p)=\theta (p,q),
\end{equation}
for any $p, q\in M$. Intuitively,  $\theta_q$ plays the role of a
potential function defined on $M$, associating with any point $p\in
M$ the scalar potential $\theta_q(p)$ such that $\theta_q(q)=0$.

Let $(M,T)$ be a one-dimensional tension space with a tension
structure $T$ generated by a single tension function $\theta$ and
assume that $\theta (\tau (p),\sigma (p))\neq 0$, for any $p\in M$.
Then we define the $(\sigma ,\tau )$-quantum difference operator
$d\equiv d_{\sigma}^{\tau}$
\begin{equation}\label{dsigmatau}
df(p)=f(\tau (p))-f(\sigma (p)),
\end{equation}
and the $(\sigma ,\tau )$-quantum derivative operator $D\equiv
D_{\sigma}^{\tau}$

\begin{equation}\label{Dsigmatau}
Df(p)=\frac{df(p)}{d\theta_q(p)}\equiv
\frac{f(\tau(p))-f(\sigma(p))}{\theta(\tau(p),\sigma(p))}\, ,
\end{equation}
for any $p\in M$ (independently of $q\in M$). The following Leibniz
rule can be checked easily
\begin{equation}\label{leibniz}
D(f\cdot g)(p)=f(\tau (p))\cdot D(g)(p) + D(f)(p)\cdot g(\sigma
(p))\, ,
\end{equation}
for  any $f, g: M\rightarrow\mathbb R$.
\section{$D$-polynomials}\label{D-polynomials}
\begin{df}\label{thetadirected}
A mapping $\rho : M\rightarrow M$ is said to be rightward
$\theta$-directed if
\begin{equation}\label{rightward}
\theta(p,\rho (p))<0\, ,
\end{equation}
 and it is said to be leftward $\theta$-directed if
\begin{equation}\label{leftward}
\theta(p,\rho (p))>0\, ,
\end{equation}
 for any $p\in M$. We say that $\rho$ is a
 $\theta$-directed mapping if one of the above conditions holds.
\end{df}
Assume the notation: $\rho^0=id_M$ and $\rho^n=\rho\circ\rho^{n-1}$,
for any $n\in\mathbb N\,$.
\begin{p}
For any $\theta$-directed mapping $\rho : M\rightarrow M$,
$n\in\mathbb N$, the composition $\rho^n$ has no fixed points, i.e.
\begin{equation}\label{nofixed}
\rho^n(p)\neq p\, ,
\end{equation}
for $p\in M\,$.
\end{p}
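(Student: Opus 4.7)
The plan is to use the cocycle (additivity) property of the tension function $\theta$ to telescope $\theta(p,\rho^n(p))$ into a sum of consecutive one-step terms $\theta(\rho^{k}(p),\rho^{k+1}(p))$, each of which has a strictly definite sign by the $\theta$-directed hypothesis. The whole sum will then be strictly nonzero, while a fixed point would force it to vanish.

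First I would record the preliminary fact that $\theta(p,p)=0$ for every $p\in M$. This follows immediately from Definition~\ref{theta} by taking $p_1=p_2=p_3=p$, which gives $2\theta(p,p)=\theta(p,p)$. As a consequence, if $\rho^n(p)=p$ were to hold, we would necessarily have $\theta(p,\rho^n(p))=0$.

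Next I would prove by induction on $n$, using the additivity of $\theta$, the telescoping identity
\begin{equation}\label{telescope}
\theta(p,\rho^n(p))=\sum_{k=0}^{n-1}\theta\bigl(\rho^k(p),\rho^{k+1}(p)\bigr)\, .
\end{equation}
The base case $n=1$ is trivial, and the inductive step uses $\theta(p,\rho^{n+1}(p))=\theta(p,\rho^n(p))+\theta(\rho^n(p),\rho^{n+1}(p))$, which is a direct application of Definition~\ref{theta} with $p_1=p$, $p_2=\rho^n(p)$, $p_3=\rho^{n+1}(p)$.

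Finally, applying the $\theta$-directed hypothesis to each summand with $q=\rho^k(p)\in M$, I obtain $\theta(\rho^k(p),\rho^{k+1}(p))=\theta(q,\rho(q))$, which is strictly negative in the rightward case and strictly positive in the leftward case, uniformly in $k$. Thus every term in~\eqref{telescope} has the same strict sign, and consequently $\theta(p,\rho^n(p))\neq 0$ for every $p\in M$ and every $n\in\mathbb{N}$. Combined with the first paragraph, this rules out $\rho^n(p)=p$, which is the claim. I do not expect any real obstacle here: the only subtle point is making sure one invokes Definition~\ref{theta} in the right orientation to get the telescoping, after which the sign argument is immediate.
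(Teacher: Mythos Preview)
Your proof is correct and follows essentially the same route as the paper: telescope $\theta$ along the orbit using the additivity from Definition~\ref{theta}, then observe that all summands share a strict sign by the $\theta$-directedness hypothesis. The paper writes the telescoping as $\theta(\rho^n(p),p)=\theta(\rho^n(p),\rho^{n-1}(p))+\cdots+\theta(\rho(p),p)$ rather than in your orientation, and leaves the fact $\theta(p,p)=0$ implicit, but the argument is the same.
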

\begin{proof} Let $\rho $ be a rightward $\theta$-directed mapping. Then we
have inequalities $\theta (\rho (p), p)>0$, ... , $\theta
(\rho^n(p),\rho^{n-1}(p))>0$, for any $n\in\mathbb N$ and $p\in M$.
Consequently,
$$
\theta (\rho^n(p),p)=\theta (\rho^n(p),\rho^{n-1}(p))+\ldots +\theta
(\rho (p), p)>0\, .
$$
Analogously, for a leftward $\theta$-directed mapping we show that
$\theta (\rho^n(p),p)<0\,$, for any $n\in\mathbb N$ and $p\in
M$.\hfill
\end{proof}
 Let us notice that condition (\ref{nofixed}) is not
a consequence of the weaker assumption that $\theta (\rho (p),p)\neq
0$, for any $p\in M$. In that case there would be $\rho (p)\neq p$
but not necessarily $\rho^n (p)\neq p\,$, for any $n\in\mathbb N$
and $p\in M$.
\begin{df}
We say that $\theta$ is homogeneous with respect to $\rho$ (shortly,
$\rho$-homogeneous) if there exists $r\in\mathbb R$, the so-called
$\rho$-homogeneity coefficient, such that
\begin{equation}\label{homogeneous}
\theta (\rho (p_1),\rho (p_2))=r\cdot\theta (p_1,p_2),
\end{equation}
for any $p_1, p_2\in M$.
\end{df}
\begin{p}\label{positivecoeff}Let $\rho :M\rightarrow M$ be a $\theta$-directed mapping
and $\theta$ be a $\rho$-homogeneous tension function. Then, for the
$\rho$-homogeneity coefficient we get
$r>0\, $.
\end{p}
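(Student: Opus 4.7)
The plan is to extract the sign of $r$ directly from the homogeneity relation specialized to a single orbit point. Pick any $p \in M$ (we may assume $M \neq \emptyset$, otherwise the statement is vacuous). Apply the $\rho$-homogeneity identity (\ref{homogeneous}) with $p_1 = p$ and $p_2 = \rho(p)$, which yields
\[
\theta(\rho(p),\rho^2(p)) \;=\; r\cdot\theta(p,\rho(p)).
\]

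Now I would split into the two cases of Definition \ref{thetadirected}. In the rightward case, applying the defining inequality $\theta(q,\rho(q))<0$ once at $q=p$ and once at $q=\rho(p)$ gives $\theta(p,\rho(p))<0$ and $\theta(\rho(p),\rho^2(p))<0$ simultaneously. Since the above displayed equation equates these two negative quantities up to the factor $r$, and since both sides are nonzero (strictly negative), dividing yields $r>0$. The leftward case is identical after replacing ``$<0$'' by ``$>0$'' throughout, and again the common sign of the two sides forces $r>0$.

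There is essentially no obstacle: the whole argument rests on noticing that $\theta$-directedness is a hypothesis about \emph{every} point of $M$, so in particular it applies simultaneously at $p$ and at its image $\rho(p)$, which is precisely what the one-step homogeneity relation couples together. The only care needed is to make sure one does not divide by zero, which is automatic since each directedness inequality is strict.
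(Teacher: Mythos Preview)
Your argument is correct and is essentially the same as the paper's own proof: both specialize the homogeneity relation to the pair $(p,\rho(p))$ and then use $\theta$-directedness at $p$ and at $\rho(p)$ to conclude that the two sides have the same (nonzero) sign, forcing $r>0$. The only cosmetic difference is that the paper writes the relation as $\theta(\rho^2(p),\rho(p))=r\cdot\theta(\rho(p),p)$, which is your identity with the arguments swapped via skew-symmetry.
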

\begin{proof} Directly from Definition (\ref{thetadirected}) we get
$r\neq 0$. Since $r$ is a $\rho$-homogeneity coefficient for
$\theta$ and $\rho$ is a $\theta$-directed mapping, $\theta
(\rho^2(p),\rho (p))$ and $\theta (\rho (p), p)$ are of common sign
and $\theta (\rho^2(p),\rho (p))=r\cdot\theta (\rho (p), p)$. Hence
we conclude that $r>0$.
\end{proof}

Let $c\in\mathbb R$ and define the constant function
$\hat{c}:M\rightarrow\mathbb R$, $\hat{c}(p):=c\,$. Evidently,
$\hat{c}\in ker\, D$, for any $c\in\mathbb R$, which means that
$dim\, ker\,D >0$. Hence, by formula (\ref{nontriv}) we can write
$D\in{\cal R}^+(X)$, for $X={\mathbb R}^M$.

Now, by using Definition \ref{deg-polynomials}, let us analyze the
elements of $Z_{m+1}(D)$, i.e. $D$-polynomials of degree
$m\in\mathbb N\cup\{0\}$. Since $D(\hat{1})=0$, the unitary constant
function $\hat{1}:M\rightarrow\mathbb R$ is a $D$-polynomial of
degree $deg\,\hat{1}=0$, i.e. $\hat{1}\in Z_1(D)$.

On the strength of Proposition \ref{polsumRz}, the explicit form of
$D$-polynomials can be obtained from formula (\ref{uRz}), provided a
right inverse $R\in\cal R_{D}$ is known explicitely. If $\sigma$ and
$\tau$ are two commuting bijections, a concrete example of a right
inverse of $D$ is shown in \cite{MULT1}.

 Then, let us fix a point $q\in M$ and define the
following sequence of functions
$\theta_{q}^{(m)}:M\rightarrow\mathbb R$, $m\in\mathbb N_0$,
\begin{equation}\label{Wkq}
\theta_{q}^{(0)}=\hat{1}\;\;\; \mbox{and}\;\;\;
\theta_{q}^{(m)}=\prod_{k=1}^{m}\theta_{\tau^{m-k}\sigma^{k-1}(q)}\,
,
\end{equation}
e.g. for $p\in M$,  $\theta_{q}^{(1)}(p)=\theta (p,q)\,$,
$\theta_{q}^{(2)}(p)=\theta (p,\tau (q))\theta (p,\sigma
(q))\,$,\linebreak $\theta_{q}^{(3)}(p)=\theta (p,\tau^2 (q))\theta
(p,\tau\sigma (q))\theta (p,\sigma^2 (q))\,$, etc.
\begin{p}\label{n}
Let $\sigma ,\tau :M\rightarrow M$ be commuting maps and $\theta$ be
a tension function homogeneous with respect to $\sigma$ and $\tau$
with the homogeneity coefficients $s$ and $t$, respectively, such
that $\theta (\tau (p),\sigma (p))\neq 0$ for any $p\in M$. Then,
for any $n\in\mathbb N$, the following formula is true
\begin{equation}\label{DWn}
D\theta_{q}^{(n)}=[n]_\sigma^\tau\cdot \theta_{q}^{(n-1)}\, ,
\end{equation}
where
\begin{equation}\label{[n]}
 [n]_\sigma^\tau = \sum\limits_{k=1}^{n}t^{n-k}s^{k-1}\, .
 \end{equation}
\end{p}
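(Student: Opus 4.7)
I would proceed by induction on $n\in\mathbb N$, using the Leibniz rule (\ref{leibniz}) as the engine of the inductive step. The key preliminary observation is a factorization of $\theta_q^{(n+1)}$ that isolates one vertex: since $\sigma$ and $\tau$ commute,
\[
\theta_q^{(n+1)}(p)=\theta_{\tau(q)}^{(n)}(p)\cdot\theta(p,\sigma^{n}(q)),
\]
because the points $\tau^{n-k+1}\sigma^{k-1}(q)=\tau^{n-k}\sigma^{k-1}(\tau(q))$ for $k=1,\ldots,n$ together with the extra point $\sigma^{n}(q)$ recreate exactly the list $\tau^{n+1-k}\sigma^{k-1}(q)$, $k=1,\ldots,n+1$.

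\textbf{Base case.} For $n=1$ one has $\theta_q^{(1)}=\theta_q$, and by the cocycle identity of Definition \ref{theta},
\[
D\theta_q(p)=\frac{\theta(\tau(p),q)-\theta(\sigma(p),q)}{\theta(\tau(p),\sigma(p))}=1=[1]_\sigma^\tau\cdot\theta_q^{(0)}(p).
\]

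\textbf{Inductive step.} Assume $D\theta_q^{(n)}=[n]_\sigma^\tau\cdot\theta_q^{(n-1)}$ for every choice of base point, and apply (\ref{leibniz}) to the factorization above:
\[
D\theta_q^{(n+1)}(p)=\theta_{\tau(q)}^{(n)}(\tau(p))\cdot D[\theta(\cdot,\sigma^{n}(q))](p)+D\theta_{\tau(q)}^{(n)}(p)\cdot\theta(\sigma(p),\sigma^{n}(q)).
\]
The first factor $D[\theta(\cdot,\sigma^{n}(q))](p)$ equals $1$ by the base case. The inductive hypothesis supplies $D\theta_{\tau(q)}^{(n)}(p)=[n]_\sigma^\tau\cdot\theta_{\tau(q)}^{(n-1)}(p)$. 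Now I would reduce the two remaining pieces back to $\theta_q^{(n)}(p)$ using the homogeneity of $\theta$: pulling out a $\tau$ from every factor of $\theta_{\tau(q)}^{(n)}(\tau(p))$ gives
\[
\theta_{\tau(q)}^{(n)}(\tau(p))=\prod_{k=1}^{n}\theta(\tau(p),\tau(\tau^{n-k}\sigma^{k-1}(q)))=t^{n}\,\theta_q^{(n)}(p),
\]
while extracting one factor of $\sigma$ from $\theta(\sigma(p),\sigma^{n}(q))$ and combining it with $\theta_{\tau(q)}^{(n-1)}(p)$ (whose vertex list $\tau^{n-k}\sigma^{k-1}(q)$, $k=1,\ldots,n-1$, is exactly the vertex list of $\theta_q^{(n)}(p)$ minus $\sigma^{n-1}(q)$) yields
\[
\theta_{\tau(q)}^{(n-1)}(p)\cdot\theta(\sigma(p),\sigma^{n}(q))=s\,\theta_q^{(n)}(p).
\]

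\textbf{Conclusion.} Substituting back,
\[
D\theta_q^{(n+1)}(p)=\bigl(t^{n}+s\cdot[n]_\sigma^\tau\bigr)\theta_q^{(n)}(p),
\]
and a one-line rearrangement of the geometric-style sum (\ref{[n]}) shows $t^{n}+s\cdot[n]_\sigma^\tau=[n+1]_\sigma^\tau$, closing the induction.

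\textbf{Expected obstacle.} The only subtle point is the index bookkeeping: one must verify carefully that the point sets $\{\tau^{n-k}\sigma^{k-1}(q)\}$ behave correctly under the shifts $q\mapsto\tau(q)$ and $q\mapsto\sigma(q)$, which is exactly where the hypothesis that $\sigma$ and $\tau$ commute (and Proposition \ref{positivecoeff} guaranteeing non-degenerate $s,t$) enters. Once the factorization $\theta_q^{(n+1)}=\theta_{\tau(q)}^{(n)}\cdot\theta(\cdot,\sigma^{n}(q))$ is established the rest is essentially mechanical.
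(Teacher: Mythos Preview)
Your inductive argument is correct. The factorization $\theta_q^{(n+1)}=\theta_{\tau(q)}^{(n)}\cdot\theta(\cdot,\sigma^n(q))$, the two homogeneity reductions, and the recursion $t^n+s\,[n]_\sigma^\tau=[n+1]_\sigma^\tau$ all check out exactly as you state them. (One minor quibble: your parenthetical appeal to Proposition~\ref{positivecoeff} for ``non-degenerate $s,t$'' is misplaced---that proposition assumes $\theta$-directedness, which is not hypothesized here, and your argument never actually needs $s,t\neq 0$.)

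The paper takes a different route: rather than induction, it performs a single direct computation of $d\theta_q^{(n)}(p)$ for general $n$. It writes out the two products $\theta_q^{(n)}(\tau(p))$ and $\theta_q^{(n)}(\sigma(p))$, peels off the extreme factors $\theta(\tau(p),\sigma^{n-1}(q))$ and $\theta(\sigma(p),\tau^{n-1}(q))$, and uses homogeneity to recognize the common factor $\theta_q^{(n-1)}(p)$ in the remaining products. The residual expression $t^{n-1}\theta(\tau(p),\sigma^{n-1}(q))-s^{n-1}\theta(\sigma(p),\tau^{n-1}(q))$ is then reduced by an iterated telescoping identity (repeatedly inserting $\theta(\tau(p),\sigma(p))$ via the tension cocycle and pulling a factor of $s/t$ through) until the sum $[n]_\sigma^\tau\cdot\theta(\tau(p),\sigma(p))$ emerges. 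Your approach trades that explicit telescoping for the Leibniz rule~(\ref{leibniz}) and an inductive hypothesis, which makes the structure cleaner and the bookkeeping lighter; the paper's computation, on the other hand, is self-contained at a fixed $n$ and does not require varying the base point $q$.
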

\begin{proof}
$$d \theta_{q}^{(n)}(p)=
\theta_{q}^{(n)}(\tau (p))-\theta_{q}^{(n)}(\sigma(p))=
$$
$$=
\prod\limits_{k=1}^{n}\theta (\tau (p),\tau^{n-k}\sigma^{k-1}(q))
-\prod\limits_{k=1}^{n}\theta (\sigma (p),
\tau^{n-k}\sigma^{k-1}(q))=
$$
$$=
\theta(\tau(p),\sigma^{n-1}(q)) \prod\limits_{k=1}^{n-1}\theta (\tau
(p),\tau^{n-k}\sigma^{k-1}(q))-
$$
$$
-\theta(\sigma(p),\tau^{n-1}(q)) \prod\limits_{k=2}^{n}\theta
(\sigma (p), \tau^{n-k}\sigma^{k-1}(q))=
$$
$$=
\left(t^{n-1}\theta(\tau(p),\sigma^{n-1}(q))-s^{n-1}\theta(\sigma(p),\tau^{n-1}(q))\right)
\prod\limits_{k=1}^{n-1}\theta(p,\tau^{n-1-k}\sigma^{k-1}(q))
$$
$$=
\left(t^{n-1}\theta(\tau(p),\sigma^{n-1}(q))-s^{n-1}\theta(\sigma(p),\tau^{n-1}(q))\right)
\theta_{q}^{(n-1)}(p)\, .
$$
In turn,
$$
t^{n-1}\theta(\tau(p),\sigma^{n-1}(q))=
t^{n-1}\theta(\tau(p),\sigma(p))
+t^{n-1}\theta(\sigma(p),\sigma^{n-1}(q))=
$$
$$
=t^{n-1}\theta(\tau(p),\sigma(p))
+t^{n-2}s\theta(\tau(p),\sigma^{n-2}\tau(q))=
$$
$$
=(t^{n-1}+t^{n-2}s)\theta(\tau(p),\sigma(p))+
t^{n-3}s^{2}\theta(\tau(p),\sigma^{n-3}\tau^{2}(q))=\ldots
$$
$$
\ldots =(t^{n-1}+\ldots +t^{n-i}s^{i-1})\theta(\tau(p),\sigma(p))+
t^{n-i}s^{i-1}\theta(\tau(p),\sigma^{n-i}\tau^{i-1}(q))=\ldots
$$
$$
\ldots
=\sum\limits_{i=1}^{n}t^{n-i}s^{i-1}\theta(\tau(p),\sigma(p))+
s^{n-1}\theta(\tau(p),\tau^{n-1}(q))\, .
$$
Finally,
$$
D\theta_{q}^{(n)}(p)= \frac{d\theta_{q}^{(n)}(p)}{\theta
(\tau(p),\sigma(p))}=\sum\limits_{i=1}^{n}t^{n-i}s^{i-1}\cdot
\theta_{q}^{(n-1)}(p)=[n]_{\sigma}^{\tau}\cdot
\theta_{q}^{(n-1)}(p)\, .
$$
\end{proof}
Since there is $D\theta_{q}^{(0)}=\hat{0}$ (zero constant function),
formula (\ref{DWn}) can be extended to the case $n=0$, if we define
$[0]_\sigma^\tau=0$. Since $\sigma$ and $\tau$ are fixed mappings
giving rise to a concrete type of a quantum calculus considered, the
indices $\sigma$ and $\tau$ will be omitted, i.e. the notation
$[n]\equiv [n]_{\sigma}^{\tau}$ will be used in the sequel. With the
symbol $[n]$ we associate the $(\sigma ,\tau )$-quantum factorial
defined as
\begin{equation}\label{factorial}
[n]!=
\begin{cases}1& if\;\; n=0\\
[n]\cdot [n-1]!& if\;\; n=1,2,\ldots\, .
\end{cases}
\end{equation}
 An immediate consequence of the last proposition is that each
function $\theta_{q}^{(n)}\colon M\rightarrow\mathbb R$ is a
representative element of $Z_{n+1}(D)$, i.e. it is a $D$-polynomial
of degree $deg\, \theta_{q}^{(n)}=n$, for $n\in{\mathbb N}_0$.
\begin{p}For any function $\zeta\in Z(D)$ and any mapping $\chi\colon M\rightarrow M$
commuting with $\sigma$ and $\tau$, there is also $\zeta\circ\chi\in
Z(D)$.
\end{p}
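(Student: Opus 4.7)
The plan is to verify directly that $\zeta\circ\chi$ lies in $\ker D$ by evaluating the numerator of $D(\zeta\circ\chi)$ through formula (\ref{Dsigmatau}) and then exploiting the commutation hypothesis. The argument is short, so the main work is to set up the right reformulation of $\zeta\in Z(D)$.

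First, I would unpack what $\zeta\in Z(D)$ means concretely. Since the denominator $\theta(\tau(p),\sigma(p))$ is nonzero on $M$ by the standing assumption, the identity $D\zeta=\hat 0$ is equivalent to the pointwise condition
\begin{equation*}
\zeta(\tau(p))=\zeta(\sigma(p))\qquad\text{for every }p\in M.
\end{equation*}
This equivalent form is what I would feed into the computation below. Note that because $\chi\colon M\to M$, for any $p\in M$ the point $\chi(p)$ again lies in $M$, so the above identity may be legitimately evaluated at $\chi(p)$.

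Next, I would fix an arbitrary $p\in M$ and compute the numerator of $D(\zeta\circ\chi)(p)$. Using $\chi\circ\tau=\tau\circ\chi$ and $\chi\circ\sigma=\sigma\circ\chi$, I get
\begin{equation*}
(\zeta\circ\chi)(\tau(p))=\zeta(\tau(\chi(p))),\qquad
(\zeta\circ\chi)(\sigma(p))=\zeta(\sigma(\chi(p))).
\end{equation*}
Applying the reformulated kernel condition at the point $\chi(p)\in M$ gives $\zeta(\tau(\chi(p)))=\zeta(\sigma(\chi(p)))$, so the numerator in (\ref{Dsigmatau}) vanishes and $D(\zeta\circ\chi)(p)=0$. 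Since $p$ was arbitrary, $\zeta\circ\chi\in Z(D)$.

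There is essentially no obstacle here: the statement reduces to a one-line manipulation once the formula for $D$ is unpacked and the commutation of $\chi$ with $\sigma$ and $\tau$ is used. The only item requiring any care is the verification that every evaluation point stays inside $M$, which is immediate from $\chi(M)\subseteq M$.
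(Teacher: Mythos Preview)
Your argument is correct and mirrors the paper's own proof: both reduce $\zeta\in Z(D)$ to the identity $\zeta\circ\tau=\zeta\circ\sigma$, post-compose with $\chi$, and then use the commutation $\chi\circ\tau=\tau\circ\chi$, $\chi\circ\sigma=\sigma\circ\chi$ to conclude $\zeta\circ\chi\in Z(D)$. The only cosmetic difference is that the paper phrases this at the level of function compositions while you work pointwise.
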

\begin{proof}
Since $\zeta\in Z(D)$, there is $\zeta\circ\tau = \zeta\circ\sigma$
and therefore $\zeta\circ\tau\circ\chi = \zeta\circ\sigma\circ\chi$.
The commutativity of $\sigma$, $\tau$ with $\chi$ implies that
$\zeta\circ\chi\circ\tau = \zeta\circ\chi\circ\sigma$ or
equivalently $\zeta\circ\chi\in Z(D)$.
\end{proof}
With any basis $\{\zeta_s\colon s\in S\}$ of the linear space $Z(D)$ one can always associate a family
$\{q_s\in M\colon s\in S\}$ of points such that
$\zeta_{s}(q_{s})\neq 0$ and $\zeta_{s_1}(q_{s_2})=0$ whenever $s_1\neq s_2$, for $s, s_1, s_2\in S$.
Then, one can always assume that functions $\zeta_s$ are normalized in such a way that
\begin{equation}\label{zetanormaliz}
\zeta_{s}(q_{t})=\begin{cases}
1& if\;\; s=t\\
0& if\;\; s\neq t\, ,
\end{cases}
\end{equation}
for any $s, t\in S$.

Let us define functions
\begin{equation}\label{zeta-k-s-def}
\zeta_{q_{s}s}^{(k)}=\frac{1}{[k]!}(\zeta_s\circ\sigma^{-k})\cdot\theta_{q_{s}}^{(k)}\,
,
\end{equation}
for ${s\in S}$, $k\in{\mathbb N}_0$. Then by formula (\ref{leibniz})
and Proposition \ref{n} we obtain
\begin{equation}
D\zeta_{q_{s}s}^{(k)}=\frac{1}{[k]!}(\zeta_s\circ\sigma^{-k}\circ\sigma)\cdot
D\theta_{q_{s}}^{(k)}=\zeta_{q_{s}s}^{{(k-1)}}\, .
\end{equation}
The consequence of the last formula is that $\zeta_{q_{s}s}^{(k)}\in
Z_{k+1}(D)$, i.e. $\zeta_{q_{s}s}^{(k)}$ is a $D$-polynomial of
degree $deg\,\zeta_{q_{s}s}^{(k)}=k$, for any $s\in S$ and
$k\in{\mathbb N}_0$.
\begin{p}
The family $\{\zeta_{q_{s}s}^{(k)}\colon s\in S, k\in{\mathbb
N}_0\}$ is a basis of the linear space $P(D)$ and
$\{\zeta_{q_{s}s}^{(k)}\colon s\in S,\; k=0,\ldots ,n\}$ is a basis
of the linear space $P_n(D)$.
\end{p}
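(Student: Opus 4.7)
The plan is to mirror the proof of Proposition \ref{polbasis}, with $\zeta_{q_{s}s}^{(k)}$ taking the structural role that $R^k\zeta_s$ played there. This substitution is justified by two identities already established: the recursion $D\zeta_{q_{s}s}^{(k)}=\zeta_{q_{s}s}^{(k-1)}$ (displayed just before the proposition) and the base value $\zeta_{q_{s}s}^{(0)}=\zeta_s$ (immediate from $[0]!=1$ and $\theta_{q_s}^{(0)}=\hat 1$). Since the argument in Proposition \ref{polbasis} uses only these two facts about the family $\{R^k\zeta_s\}$, the same skeleton of proof will go through.

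For linear independence I would take a finite linear combination $\sum_{k=0}^{n}\sum_{s\in S_k}a_{ks}\zeta_{q_{s}s}^{(k)}=0$, apply $D^n$ and use the recursion to reduce it to $\sum_{s\in S_n}a_{ns}\zeta_s=0$, which forces $a_{ns}=0$ for every $s\in S_n$ because $\{\zeta_s\}_{s\in S}$ is a basis of $Z(D)$. Applying $D^{n-1},D^{n-2},\ldots,D^0$ in turn peels off the remaining coefficients in descending degree, exactly as in Proposition \ref{polbasis}.

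For spanning I would induct on the degree $n$ of $u\in P_n(D)=\ker D^{n+1}$. The base $n=0$ is immediate since $\zeta_{q_{s}s}^{(0)}=\zeta_s$ and $P_0(D)=Z(D)$. For $n\ge 1$, $Du\in P_{n-1}(D)$, so the inductive hypothesis yields an expansion $Du=\sum_{k=0}^{n-1}\sum_{s\in S_k}a_{ks}\zeta_{q_{s}s}^{(k)}$ with finite index sets $S_k$; lifting every index by one via $\tilde u=\sum_{k=0}^{n-1}\sum_{s\in S_k}a_{ks}\zeta_{q_{s}s}^{(k+1)}$ and invoking the recursion gives $D\tilde u=Du$, so that $u-\tilde u\in Z(D)$ admits an expansion in $\{\zeta_{q_{s}s}^{(0)}\}_{s\in S}$, completing the desired expansion of $u$. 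The statement for $P(D)$ then follows from $P(D)=\bigcup_n P_n(D)$.

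There is no genuine obstacle, since the argument is algebraically parallel to that of Proposition \ref{polbasis}; the only care required is bookkeeping when $S$ is infinite, but the inductive structure keeps every index set involved finite automatically, and no explicit right inverse $R$ need ever be constructed.
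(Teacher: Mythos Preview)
Your linear independence argument is identical to the paper's: apply $D^{n}$, kill the top layer of coefficients using the fact that $\{\zeta_s\}$ is a basis of $Z(D)$, then descend.

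For the spanning part your route differs. The paper does not induct on the degree but instead computes the coefficients explicitly: given $u$ of degree $k$, it evaluates $D^{k}u$ at the points $q_s$ and uses the normalization $\zeta_s(q_t)=\delta_{st}$ (formula~(\ref{zetanormaliz})) to read off $a_{ks}=D^{k}u(q_s)$, then gives a recursive formula (\ref{ams-recursion}) for the lower-degree coefficients. Your induction with the lift $\tilde u$ is more self-contained and avoids any appeal to the normalization of the $\zeta_s$ or to point evaluation; it uses only the recursion $D\zeta_{q_ss}^{(k)}=\zeta_{q_ss}^{(k-1)}$ and the base $\zeta_{q_ss}^{(0)}=\zeta_s$. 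The paper's approach, on the other hand, yields explicit expressions for the expansion coefficients, which it exploits immediately afterwards in Section~\ref{Taylor} to derive the Taylor formulae (\ref{lambda-ks-computed})--(\ref{TaylorW}).
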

\begin{proof} Assume that $\sum\limits_{i=0}^{k}\sum\limits_{s\in S_i}
a_{is}\zeta_{q_{s}s}^{(i)}=\hat{0}$, for some coefficients $a_{is}$,
 $S_i\subset S$ - finite subsets, $i=0,\ldots ,k$. Then we
 calculate
 $D^{k}\sum\limits_{i=0}^{k}\sum\limits_{s\in S_i}
a_{is}\zeta_{q_{s}s}^{(i)}=\sum\limits_{s\in S_k}
a_{ks}\zeta_{s}=\hat{0}$ which implies that $a_{ks}=0$ for $s\in
S_k$. Hence we can write $\sum\limits_{i=0}^{k-1}\sum\limits_{s\in
S_i} a_{is}\zeta_{q_{s}s}^{(i)}=\hat{0}$. Similarly, we  calculate
$D^{k-1}\sum\limits_{i=0}^{k-1}\sum\limits_{s\in S_i}
a_{is}\zeta_{q_{s}s}^{(i)}=\sum\limits_{s\in S_{k-1}}
a_{(k-1)s}\zeta_{s}=\hat{0}$ and show that $a_{(k-1)s}=0$ for $s\in
S_{k-1}$. Analogously, step by step we prove that $a_{ms}=0$ for
$s\in S_m$ and $m=k-2,...,1$. Now, let $u\in P(D)$ be of degree
$deg\, u=k$. We will show that
$u=\sum\limits_{i=0}^{k}\sum\limits_{s\in S_i}
a_{is}\zeta_{q_{s}s}^{(i)}$. Let us calculate:
$$D^ku=\sum\limits_{i=0}^{k}\sum\limits_{s\in S_i}
a_{is}D^k\zeta_{q_{s}s}^{(i)}=\sum\limits_{s\in S_k}
a_{ks}\zeta_{s}.$$ Then, using formula (\ref{zetanormaliz}) we
obtain
\begin{equation}
 a_{ks}= D^ku(q_s)\, ,
 \end{equation}
for $s\in S_k$. Other coefficients $a_{ms}$, for $0\leq m <k$, we
can calculate from the recursive formula
\begin{equation}\label{ams-recursion}
a_{ms}=D^mu(q_s)-\sum_{j=1}^{k-m}\sum_{t\in
S_{m+j}}a_{(m+j)t}\zeta_{q_tt}^{(j)}(q_s)\, .
\end{equation}
\end{proof}
One can select (not uniquely) a linear subspace $Q(D)$ in
$X={\mathbb R}^M$ such that
\begin{equation}\label{XDsumQ}
X=P(D)\oplus Q(D)\, .
\end{equation}
According to formula (\ref{phiPQ}), any right inverse $R\in{\cal
R}_D$ can be decomposed as a direct sum
\begin{equation}\label{Rdecomposed}
R=R_{P}\oplus R_{Q}
\end{equation}
of its restrictions $R_{P}=R_{|P(D)}$ and $R_{Q}=R_{|Q(D)}$. The
component $R_P$ can be defined on the basis
$\{\zeta_{q_ss}^{(k)}\colon s\in S, k\in{\mathbb N}_0\}$ by formula
\begin{equation}\label{R_Pdefinition}
R_{P}\zeta_{q_ss}^{(k)}=\zeta_{q_ss}^{(k+1)}\, ,
\end{equation}
for any ${s\in S}$, $k\in{\mathbb N}_0$. Concerning the component
$R_Q$, we can assume any definition.
 Then, on the strength of formula (\ref{Vdirect}),
the linear space $P(D)$ is a direct sum of $D$- and $R$-invariant
linear subspaces $V_s(R)$, $s\in S$. The initial operator $F$
corresponding to the above $R$ is also a direct sum
\begin{equation}\label{Fdecomposed}
F=F_P\oplus F_Q\, ,
\end{equation}
with the components $F_P=F_{|P(D)}$ and $F_Q=F_{|Q(D)}$ given by
formula (\ref{FbyR}). For $F_{P}$ we obtain the explicit formula
\begin{equation}
F_{P}\zeta_{q_ss}^{(k)}=(I-R_{P}D_\sigma^\tau)\zeta_{q_ss}^{(k)}=\begin{cases}\zeta_s& if\;\; k=0\\
\hat{0}& if\;\; k=1,2,\ldots\, ,
\end{cases}
\end{equation}
where $s\in S$.
\section{Taylor formula in $(\tau ,\sigma )$-quantum calculus}\label{Taylor}
Let $R$ be a right inverse of $D=D_\sigma^\tau$ and $F$ be the
initial operator corresponding to $R$. Then, according to formula
(\ref{taylor}), we have the Taylor formula
\begin{equation}
I=\sum_{j=0}^{n}R^jF{D}^j + R^{n+1}{D}^{n+1}\, ,
\end{equation}
which holds on $dom{(D)}^{n+1}$,  for $n\in\mathbb N_0$. Now,
suppose that $W\in Z_{n+1}(D)$, i.e. $W\in P(D)$ is a $D$-polynomial
of degree $deg W=n$. Then $(D)^{n+1}W=\hat{0}$ and
\begin{equation}
W=\sum_{k=0}^{n}R^kF{D}^kW \, .
\end{equation}
Since $F{(D)}^kW\in\, Z(D)$, there exists a finite subset
$S_W^k\subset S$ and coefficients $\lambda_{ks}\in\mathbb R$, $s\in
S_W^k$, such that
\begin{equation}
F{(D)}^kW=\sum_{s\in S_W^k} \lambda_{ks}\zeta_s\, .
\end{equation}
 Thus, we obtain the formula
\begin{equation}\label{W-with-R-zeta-s}
W=\sum_{k=0}^{n}\sum_{s\in S_W^k}\lambda_{ks} R^k\zeta_s \, .
\end{equation}
The coefficients $\lambda_{ks}$ are given by
\begin{equation}
\lambda_{ks}=(F{D}^kW)(q_s)\, ,
\end{equation}
which allows us to write
\begin{equation}
W=\sum_{k=0}^{n}\sum_{s\in S_W^k}(F{D}^kW)(q_s) R^k\zeta_s \, .
\end{equation}
Define functions $\Lambda_{q_ss}^{(m)}\colon M\rightarrow\mathbb R$
recursively as $\Lambda_{q_ss}^{(0)}=\zeta_s$ and
\begin{equation}\label{Lambda-qss-def}
\Lambda_{q_ss}^{(m)}=\zeta_{q_ss}^{(m)}-\sum_{i=0}^{m-1}\zeta_{q_ss}^{(m-i)}(q_s)
\Lambda_{q_ss}^{(i)}\, ,
\end{equation}
for $q_s\in M$, $s\in S$ and $m\in\mathbb N$. One can easily check
that
\begin{equation}
\Lambda_{q_ss}^{(m)}(q_s)=\hat{0}\, ,
\end{equation}
\begin{equation}
D\Lambda_{q_ss}^{(m)}=\Lambda_{q_ss}^{(m-1)} \, ,
\end{equation}
for any $s\in S$, $m\in{\mathbb N}$. Hence, for any $s\in S$, the
family $ \{\Lambda_{q_ss}^{(m)}\colon m\in{\mathbb N}_0\} $ is
linearly independent and forms a basis of the linear space of
$s$-homogeneous $D$-polynomials
\begin{equation}\label{VbyRLambda}
V_s(D)=Lin\{\Lambda_{q_ss}^{(m)}\colon m\in{\mathbb N}_0\}\, .
\end{equation}
On the strength of formula (\ref{PDVsdirect}) there is
$P(D)=\bigoplus\limits_{s\in S}V_s(D)$ and there exists another
subspace $Q(D)$ such that $X=P(D)\oplus Q(D)$. Below we shall use
the projection mappings
\begin{equation}\label{s-proiection}
\pi_s\colon P(D)\rightarrow V_s(D)\, ,
\end{equation}
for $s\in S$.

Let us take a right inverse $R\in {\cal R}_D$ defined on $P(D)$ by
\begin{equation}\label{LambdaR}
R\Lambda_{q_ss}^{(m)}=\Lambda_{q_ss}^{(m+1)}\, ,
\end{equation}
for any $s\in S$, $m\in{\mathbb N}_0$, while its definition on
$Q(D)$ can be any. Then we can write
\begin{equation}
\Lambda_{q_ss}^{(m)}=R^m\zeta_s\, ,
\end{equation}
for any $s\in S$, $m\in{\mathbb N}_0$. Therefore formula
(\ref{W-with-R-zeta-s}) can be written as
\begin{equation}\label{W-with-Lambda-s}
W=\sum_{k=0}^{n}\sum_{s\in S_W^k}\lambda_{ks} \Lambda_{q_ss}^{(k)}
\, .
\end{equation}
By using projections (\ref{s-proiection}),  we obtain the components
$W_s$ of $W$ defined as
\begin{equation}\label{Ws-component}
W_s\equiv\pi_s (W)=\sum_{k=0}^{n}\lambda_{ks} \Lambda_{q_ss}^{(k)},
\end{equation}
for any $s\in S_W\equiv\bigcup\limits_{k=0}^{n}S_W^k$. Naturally,
there is
\begin{equation}\label{WbyWs}
W=\sum_{s\in S_W}W_s\, .
\end{equation}
The coefficients $\lambda_{ks}$ in formula (\ref{Ws-component}) can
be computed as
\begin{equation}\label{lambda-ks-computed}
\lambda_{ks}=D^kW_s(q_s)\, ,
\end{equation}
for any $k\in{\mathbb N}_0$ and $s\in S_W$. Finally we obtain the
Taylor formulae
\begin{equation}\label{TaylorWs}
W_s=\sum_{k=0}^{n}D^kW_s(q_s) \Lambda_{q_ss}^{(k)}\, ,
\end{equation}
for $s\in S_W$, and
\begin{equation}\label{TaylorW}
W=\sum_{s\in S_W}\sum_{k=0}^{n}D^kW_s(q_s) \Lambda_{q_ss}^{(k)}\, .
\end{equation}
Naturally, if $W\in V_s(D)$, for some $s\in S$, there is
\begin{equation}\label{TaylorWinVs}
W=\sum_{k=0}^{n}D^kW(q_s) \Lambda_{q_ss}^{(k)}\, .
\end{equation}
In the particular case, for q-calculus or its symmetric version the
corresponding results one can find in Ref.\cite{kac}.


\begin{thebibliography}{11}
\bibitem{Ernst} T. Ernst, {\em The different tongues of q-calculus},
Proceedings of the Estonian Academy of Sciences, 2008, 57, 2, 81–99.
\bibitem{FK}A. Fr\"olicher, A. Kriegel, {\em Linear Spaces and Diferentiation Theory},
Pure and Applied Mathematics, J. Wiley and Sons, Chichester, 1988.
\bibitem{Hart} J.T. Hartwig, {\em Generalized Derivations on Algebras and Highest Weight
Representations of the Virasoro Algebra}, Master thesis, Lund
University, 2002.
\bibitem{Jackson}F.H. Jackson, {\em On q-definite integrals}, Quart.
J. Pure and Appl. Math. 41 (1910) 193-203.
\bibitem{kac}V. Kac, P. Cheung, {\em Quantum Calculus}, Springer-Verlag, 2002.
\bibitem{Levin} A. Levin, {\em Difference Algebra},  Springer-Verlag, 2008.
\bibitem{MULT1}P. Multarzy\'nski, {\em An algebraic analysis framework for quantum
calculus}, arXiv:1008.0672.
\bibitem{MULT2}P. Multarzy\'nski, {\em On divided difference operators in function algebras},
Demonstratio Math.Vol.XLI, No 2 (2008) 273-289.
\bibitem{DPR}D. Przeworska-Rolewicz, {\em Algebraic Analysis} PWN, Warszawa / Reidel, Dordrecht, 1988.
\bibitem{VS}L. Verde-Star, {\em Interpolation and combinatorial functions}, Stud. Appl. Math. 79 (1988), 65-92.
\bibitem{VIR}G. Virsik, {\em Right inverses of vector fields}, J. Austral. Math. Soc. (Series A) 58 (1995) 411-420.
\end{thebibliography}
\end{document}